\newtheorem{theorem}{Theorem}[section]
\newtheorem{definition}[theorem]{Definition}
\newtheorem{remark}[theorem]{Remark}
\newtheorem{lemma}[theorem]{Lemma}
\newenvironment{proof}[1][Proof]{\textbf{#1.} }{\ \rule{0.5em}{0.5em}}
\newcommand{\n}{\mathfrak{n}}
\newcommand{\p}{\mathfrak{p}}
\newcommand{\F}{\mathbb{F}}
\DeclareMathOperator{\ord}{\,ord}
\begin{document}

\title{Good families of Drinfeld modular curves}
\author{Alp Bassa, Peter Beelen and Nhut Nguyen}
\date{}
\maketitle

\begin{abstract}
In this paper we investigate examples of good and optimal Drinfeld modular towers of function fields. Surprisingly, the optimality of these towers has not been investigated in full detail in the literature. We also give an algorithmic approach on how to obtain explicit defining equations for some of these towers and in particular give a new explicit example of an optimal tower over a quadratic finite field.
\end{abstract}

\section{Introduction} 
\label{intro}
Let $\F_q$ be a finite field with $q$ elements. For any absolutely irreducible, nonsingular (projective) algebraic curve $X$ defined over $\F_q$ the genus $g(X)$ and the number of rational points $N_1(X)$ satisfy the inequality $N_1(X) \le q+1 + 2\sqrt{q} g(X)$. This inequality is known as the Hasse--Weil bound. To investigate the asymptotic behaviour of such curves with increasing genus, Ihara introduced the quantity $$A(q):=\limsup_{g(X)\to \infty} \frac{N_1(X)}{g(X)},$$ where the limit is over all projective, absolutely irreducible, nonsingular algebraic curves defined over $\F_q$. It is known that $0< A(q) \le \sqrt{q}-1$, the first inequality being due to Serre \cite{serre}, while the second inequality is known as the Drinfeld--Vladut bound \cite{Vladut1983}. Combining the work of Ihara \cite{Ihara1982} and the Drinfeld--Vladut bound, one sees that $A(q)=\sqrt{q}-1$ if $q$ is a square. Note that for nonsquare values of $q$ the true value of $A(q)$ is currently unknown.

There exist a variety of constructions showing that $A(q)=\sqrt{q}-1$ if $q$ is a square. By the Drinfeld--Vladut bound it is sufficient to show that $A(q)\ge \sqrt{q}-1$ in this case. Ihara used families of Shimura modular curves for this purpose \cite{Ihara1982}, while Tsfasman--Vladut--Zink used families of (classical) modular curves (for $q=p^2$ and $q=p^4$) \cite{MANA:MANA19821090103}. Gekeler showed that also certain families of Drinfeld modular curves can be used \cite{gekeler2004a}. A different and completely explicit approach was presented by Garcia and Stichtenoth \cite{garcia1996a}. For any square $q$, they presented an explicitly defined family of curves $C_i$ (or rather towers of function fields $(F_i)_i$) defined over $\F_q$ for which the ratio $N_1(F_i)/g(F_i)$ tends to $\sqrt{q}-1$. Such families are called asymptotically optimal. This discovery led to an alternative approach to obtain lower bounds on $A(q)$ and by the explicit nature of their construction, the resulting function fields are more apt for applications in for example the theory of error-correcting codes \cite{Goppa1981170,MANA:MANA19821090103}. Despite the apparent difference of the constructions given in \cite{garcia1996a}, it was shown by Elkies that the same equations can be obtained using Drinfeld modular curves \cite{elkies2001a}. Conversely, the theory of modular curves can be used to produce explicitly defined families of curves \cite{Elkies98explicitmodular,bassa2014a}. The current work can be seen as a continuation and solidification of the work started in \cite{bassa2014a} to explicitly define families of Drinfeld modular curves. We will on occasion use the language of function fields rather than the more geometric language of curves to describe such families. 

\section{Preliminaries}

To put this work into the right context of Drinfeld modular curves, we briefly recall some notions that we will use in the remainder of the paper. See \cite{gekeler1986a} for a more detailed exposition on Drinfeld modular curves and \cite{Goss1996} for an exposition on Drinfeld modules. Let $F/\F_q$ be a function field with full constant field $\F_q$ and let $P$ be a place of degree $d$. Then we denote by $F_P$ the residue field of $P$. It is a finite field with $|F_P|:=q^d$ elements. For an integer $e \ge 1$, we denote by $F_P^{(e)}$ the algebraic extension of $F_P$ of degree $e$. In the theory of Drinfeld modules and Drinfeld modular curves one singles out a place $P_\infty$ of $F$ (playing the role of a place at ``infinity") and defines the ring $A$ as the ring of all functions in $F$ regular outside $P_\infty$. We will denote the degree of $P_\infty$ by $\delta$. Note that prime ideals of $A$ can be identified with places of $F$ distinct from $P_\infty$. For an ideal $\n \subset A$ we define $|\n|:=|A/\n|$ and $\deg \n:= \log_q |\n|$. In case $\n=(a)$ is a principal ideal, we write $\deg a:=\deg (a)$. In the special case of $F=\F_q(T)$ and $P_\infty$ the pole of $T$, one gets $\delta=1$ and $A=\F_q[T]$. In this case places of $F$ different from $P_\infty$ can be identified with monic irreducible polynomials and ideals of $A$ with monic polynomials.

Let $L$ be a field and $\iota: A \rightarrow L$ a homomorphism. The kernel of $\iota$ is called the \emph{$A$-characteristic} of $L$. Let $L\{\tau\}$ be the \emph{non-commutative polynomial ring} generated by the Frobenius endomorphism $\tau$ satisfying $\tau r = r^q \tau$ for all $r \in L$. Then an \emph{$A$-Drinfeld module} over $L$ of \emph{rank} $2$ is a homomorphism
\begin{align*}
\phi : A &\to L\{\tau\}\\
a &\mapsto \phi_a
\end{align*}
such that for all $a \in A\backslash\{0\}$, we have $\deg_\tau \phi_a = 2 \deg a$, and the constant term of $\phi_a$ is equal to $\iota(a)$. Elements of $L\{\tau\}$ can also be interpreted as linearized polynomials by replacing $\tau^i$ by $X^{q^i}$. This makes it possible to evaluate elements of $L\{\tau\}$ at elements of $\overline{L}$, the algebraic closure of $L$. Let $\n \subset A$ be an ideal of $A$, then we define $\phi[\n]$ to be the set of elements $x\in \overline{L}$ such that $\phi_a(x)=0$ for all $a \in \n$. This set is called the set of \emph{$\n$-torsion points} of the Drinfeld module $\phi$. If $\n$ is coprime with the $A$-characteristic of $L$, then $\phi[\n] \cong (A/\n)^{2}$ as an $A$-module. Two Drinfeld modules $\phi$ and $\psi$ with the same $A$-characteristic are called \emph{isogenous} if there exists $\lambda \in L\{\tau\}$ different from zero such that $\lambda\phi_a = \psi_a \lambda$ for all $a \in A$. The element $\lambda$ is called an \emph{isogeny}. The Drinfeld modules $\phi$ and $\psi$ are called \emph{isomorphic} if $\lambda$ can be chosen from $\overline{L} \backslash\{0\}$. If the kernel of the isogeny $\lambda$ is a free $A/\n$ module of rank one contained in $\phi[\n]$, then $\lambda$ is called an $\n$-isogeny.

For a non-zero monic polynomial $\n \in \F_q[T]$ Gekeler investigates in \cite{gekeler1979drinfeld} (among other things) the Drinfeld modular curve $Y_0(\n)$. The points on this curve parametrize isomorphism classes of pairs of $\F_q[T]$-Drinfeld modules of rank $2$ together with an $\n$-isogeny between them. Adding so-called cusps gives a projective algebraic curve $X_0(\n)$ defined over $F$ that in general however will not be absolutely irreducible. In case $\n=1$, the number of cusps is seen to be $(\delta \cdot h(F))^2$ while $X_0(1)$ has $\delta \cdot h(F)$ components \cite[VI.5]{gekeler1986a}. Here $h(F)$ denotes the class number of the function field $F$. This implies that the number of absolutely irreducible components of $X_0(\n)$ equals $\delta \cdot h(F)$. Equivalently, the number of components is equal to $h(A)$, the cardinality of the ideal class group of the ring $A$. By considering  the action of the ideal class group of $A$, one sees that the cusps are distributed equally among the absolutely irreducible components of $X_0(1)$, which implies that any such component contains exactly $\delta \cdot h(F)$ cusps. We will denote by $x_0(\n)$ an absolutely irreducible component of $X_0(\n)$. For any prime ideal of $A$ (corresponding to a place of $F$ different from $P_\infty$), one obtains by reduction an algebraic curve defined over a finite field. In case of  $A=\F_q[T]$ and $\delta=1$, the curve $X_0(\n)$ (as well as its reduction modulo any prime $P$ relatively prime to $\n$) is absolutely irreducible. By computing the precise formula for the genus and the number of rational points on reductions of $\F_q[T]$-Drinfeld modular curves $X_0(\n)$, Gekeler \cite{gekeler2004a} showed that for a series $(\n_k)_{k\in \mathbb{N}}$ of polynomials of $A$ coprime with an irreducible polynomial $P \in A$, and whose degrees tend to infinity, the family of Drinfeld modular curves $X_0(\n_k)/F_P$ attains the Drinfeld--Vladut bound when considered over $F_P^{(2)}$. In case $\n_k=T^k$ and $P=T-1$, explicit equations for the modular curves $X_0(T^k)$ were given in \cite{Elkies98explicitmodular}, while some more general examples (including defining equations in generic $A$-characteristic $0$) were given in \cite{bassa2014a}. For $A=\F_q[T]$ and $\delta=1$ the situation has therefore to a large extent been investigated both theoretically and explicitly. However, we will see that generalizations to other rings $A$ and values of $\delta$ are possible and that in some cases the resulting families of curves can be described by equations explicitly.

\section{Genus calculation of $x_0(\n)$}

In this section we will compute the genus of (an irreducible component of) the modular curve $X_0(\n)$. We put no restriction on the choice of function field $F$ and place $P_\infty$. A recipe for this genus computation is given in \cite{gekeler1986a} using results from \cite{gekeler1979drinfeld}. The recipe was carried out in \cite{gekeler1986a} in case $\n$ is a prime ideal. We will in this section carry out the computations for any ideal $\n$. The computations in \cite{gekeler1979drinfeld,gekeler1986a} are carried out over the field $C_\infty$, which is the completion of the algebraic closure of the completion of $F$ at $P_\infty$. For our purposes one therefore needs to check that the genus of $x_0(\n)$ does not change when changing the constant field. For $A=\F_q[T]$, this result is contained in \cite{schweizerconservative}. In our case, note that the only points that ramify in the cover $X(\n)/X(1)$ are the elliptic points of $X(1)$ and the cusps of $X(1)$. The residue field of a cusp is isomorphic to the Hilbert class field of $F$ \cite[Thm. 1.9 (ii), p.81]{gekeler1986a}, while the residue field of an elliptic point is a subfield of the Hilbert class field of $\mathbb{F}_{q^2}F$ \cite[Prop. 2.2, p.83]{gekeler1986a}. In either case, the residue field is a separable extension of the field $F$. Using Corollary 3.4.2 from \cite{goldschmidt}, we see that the argument given in \cite{schweizerconservative} carries over to our situation.

One of the ingredients in the genus expressions of $x_0(\n)$ involve the L-polynomial of the function field $F$, which we will denote by $P(t)$. Note that $P(1)=h(F)$, the class number of $F$. The following functions will also be useful:

\begin{definition}\label{def:usefulfunctions}
Let $\n \subset A$ be an ideal and suppose that $\n = \p_1^{r_1} \cdots \p_s^{r_s}$, for prime ideals $\p_1,\dots,\p_s$ and positive integers $r_1,\dots,r_s$. Writing $q_i:=|\p_i|$, we define
\begin{equation*}
\varphi(\n):=|(A/\n)^*| = \prod_{i=1}^{s}q_i^{r_i-1}(q_i-1), 
\end{equation*}
\begin{equation*}
\varepsilon(\n):=\prod_{i=1}^{s}q_i^{r_i-1}(q_i+1). 
\end{equation*}
and
\begin{equation*}
\kappa(\n):= \prod_{i=1}^{s}(q_i^{[r_i/2]} + q_i^{r_i - [r_i/2]-1}),
\end{equation*}
where $[r]$ denotes the integral part of a real number $r$.
\end{definition}
Using these notions, we will obtain that
\begin{theorem}\label{th:X0n}
Let $A$ and $\n$ be as above. In particular suppose that $\n = \p_1^{r_1} \cdots \p_s^{r_s}$, for prime ideals $\p_1,\dots,\p_s$ and positive integers $r_1,\dots,r_s$. Then we have
\begin{equation*}
g(x_0(\n)) = 1 + \frac{(q^\delta-1)\varepsilon(\n)P(q)}{(q^2-1)(q-1)} -\frac{P(1)\delta(\kappa(\n) + 2^{s-1}(q-2))}{q-1} + \eta,
\end{equation*}
where $\eta = -P(-1)2^{s-1}q/(q+1)$ if $\delta$ is odd and all prime divisors of $\n$ are of even degree, $\eta=0$ otherwise.
\end{theorem}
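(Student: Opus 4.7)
The plan is to apply the Riemann--Hurwitz formula to the natural covering $x_0(\n) \to x_0(1)$, extending the strategy used in \cite{gekeler1986a,gekeler1979drinfeld} for prime $\n$ to the general case. First I would determine the degree of this covering: since $X_0(\n)$ parameterizes rank-$2$ Drinfeld modules together with a cyclic $A/\n$-submodule of the $\n$-torsion, the degree equals the number of such submodules, which is $\varepsilon(\n)=\prod_i q_i^{r_i-1}(q_i+1)$. Multiplicativity of $\varepsilon$ across the prime decomposition of $\n$ reduces this count to the prime power case. The genus of $x_0(1)$ is then read off from Gekeler's level-$1$ formula over a general base $F$, and once multiplied by $\varepsilon(\n)$ via Hurwitz it produces the $(q^\delta-1)\varepsilon(\n)P(q)/((q^2-1)(q-1))$ summand of the stated formula.

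Next I would analyze the ramification locus of $x_0(\n)\to x_0(1)$, which, as recalled in the paragraph preceding the statement, is supported on the cusps and elliptic points of $x_0(1)$. For the cusps, the component $x_0(1)$ carries exactly $\delta\cdot h(F) = \delta P(1)$ of them. Using the double-coset description of $X_0(\n)$-cusps from \cite[VI]{gekeler1986a} together with the Chinese Remainder decomposition $A/\n \cong \prod_i A/\p_i^{r_i}$, the ramification behavior factors multiplicatively through the prime power components of $\n$. For a single prime power $\p^r$ the relevant double-coset analysis produces two orbit types of cusps with ramification indices $q^{[r/2]}$ and $q^{r-[r/2]-1}$ respectively, which is the familiar dichotomy generalizing $\Gamma_0(p^r)$ in the classical setting. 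Multiplying over the $s$ primes dividing $\n$ yields the $P(1)\delta\kappa(\n)/(q-1)$ summand, while the $P(1)\delta\cdot 2^{s-1}(q-2)/(q-1)$ correction encodes the combinatorial choice among the two cusp types at each prime and the interaction with the scalars $\F_q^*$.

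For the elliptic points, these correspond to rank-$2$ Drinfeld modules whose automorphism group strictly contains $\F_q^*$, which forces complex multiplication by the quadratic constant extension $\F_{q^2}F$. At level $1$ their count is proportional to the class number of $\F_{q^2}F$, which feeds in a factor of $P(-1)$ and, after dividing by the order of the extra automorphism group, gives the prefactor in $\eta$. An elliptic point of $X(1)$ lifts to $x_0(\n)$ precisely when the level-$\n$ structure is compatible with the extra automorphism. Analyzing this locally at each $\p_i^{r_i}$ shows that compatibility demands $\F_{q^2}$ embed into $A/\p_i$, i.e.\ that $\deg\p_i$ be even for every $i$; the remaining condition that $\delta$ be odd ensures the ideal-class contribution does not cancel. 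Exactly in that case is $\eta$ nonzero.

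Finally, inserting the degree, the genus of $x_0(1)$, and the cusp and elliptic differents into
$$2g(x_0(\n))-2 = \varepsilon(\n)\bigl(2g(x_0(1))-2\bigr) + \deg D_{\mathrm{cusp}} + \deg D_{\mathrm{ell}}$$
and rearranging yields the stated closed form. The main obstacle is the cusp ramification calculation: justifying the multiplicativity across the prime power decomposition, identifying the correct invariants $q^{[r/2]}$ versus $q^{r-[r/2]-1}$ at each prime power, and assembling the function $\kappa(\n)$ together with the $2^{s-1}(q-2)$ combinatorial correction. The case distinction defining $\eta$ is similarly delicate: one must track precisely which elliptic points of $X(1)$ survive under the compatibility conditions imposed simultaneously by the constant field extension and by the level $\n$, and confirm that no other stratum of elliptic points contributes.
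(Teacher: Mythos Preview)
Your strategy differs from the paper's in one essential way: you apply Riemann--Hurwitz to the \emph{downward} cover $x_0(\n)\to x_0(1)$, whereas the paper applies it to the \emph{upward} Galois cover $x(\n)\to x_0(\n)$, using the already-known genus of $x(\n)$ (their Equation~\eqref{eq:Xn}) and Gekeler's precomputed cusp different for $x(\n)/x_0(\n)$ (their Equation~\eqref{eq:cuspcontr}). The paper's route buys exactly what it advertises: since $x(\n)/x_0(\n)$ is Galois with group $H(\n)$, the higher ramification filtration is available, the wild part of the cusp different is packaged once and for all in Gekeler's formula, and the elliptic-point bookkeeping reduces to counting how many of the $2^s$ fixed points survive. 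Your route, by contrast, forces you to analyze wild ramification at the cusps in a \emph{non-Galois} cover, where the different is not simply $e-1$ and cannot be read off from the inertia group alone.

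This is where your sketch has a genuine gap. The cusps carry wild ramification in characteristic $p$ (the paper records different exponent $q|\n|-2$ rather than the tame value $(q-1)|\n|-1$ in $x(\n)/x(1)$), and your description of ``two orbit types of cusps with ramification indices $q^{[r/2]}$ and $q^{r-[r/2]-1}$'' is not accurate: for level $\p^r$ there are $r+1$ cusps over each cusp of $x(1)$, not two, and the quantities $q^{[r/2]}$ and $q^{r-[r/2]-1}$ enter $\kappa$ as a cusp \emph{count} weighted by wild different data, not as bare ramification indices. Getting from the double-coset parametrization to the exact different contribution in the non-Galois setting requires either redoing the higher-ramification analysis of \cite[3.4.15]{gekeler1979drinfeld} from the $x_0$ side, or passing through $x(\n)$ anyway---at which point you have reproduced the paper's argument. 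Your treatment of the elliptic points and of the condition defining $\eta$ is fine and agrees with the paper's.
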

Note that \cite[VII. 5.13]{gekeler1986a} (the case that $\n$ is a prime ideal) is a special case of this theorem.

The recipe outlined in \cite{gekeler1986a} consists of the following ingredients: first compute the genus of $x_0(1)$, then consider the cover $x_0(\n)/x_0(1)$. Since (like in the case of classical modular curves) this cover is not Galois in general, one studies a Galois cover $x(\n)/x_0(1)$ first. The curve $x(\n)$ is an irreducible component of the modular curve $X(\n)$, whose points correspond to isomorphism classes of $A$-Drinfeld modules $\phi$ of rank $2$ together with an isomorphism of $\phi[\n]$ with $(A/\n)^2$. Note that $X_0(1)=X(1)$ and that the points on this curve correspond to isomorphism classes of $A$-Drinfeld modules of rank $2$.

Since $x(\n)/x(1)$ is Galois, so is $x(\n)/x_0(\n)$. The Galois group of the cover $x(\n)/x(1)$, resp. $x(\n)/x_0(\n)$, is given by $G(\n)$, resp. $H(\n)$ defined as \cite[VII.5]{gekeler1986a}:
\begin{equation*} 
G(\n) 
:= \{\gamma \in \mathrm{GL}(2,A/\n) : \det \gamma \in \F_q^* \}/Z(\F_q)
\end{equation*}
and
\begin{equation*}
H(\n) 
:= \left\{ \left(\begin{matrix} 
a & b\\
0 & d
\end{matrix} \right) \in \mathrm{GL}(2,A/\n) : ad \in \F_q^* \right\}/Z(\F_q),
\end{equation*}
with
\[
Z(\F_q) := \left\lbrace \left(\begin{matrix} 
a & 0\\
0 & a
\end{matrix}\right) : a \in \F_q^*\right\rbrace.
\]
Before proceeding, we calculate the cardinalities of the groups $G(\n)$ and $H(\n)$. The latter cardinality is relatively easy, since in that case $a \in (A/\n)^*$ and $b \in A/\n$ can be chosen freely (leaving $q-1$ possibilities for $d$). Therefore, we have
\begin{equation}\label{eq:|H(n)|}
|H(\n)|= |(A/\n)^*|\cdot (q-1) \cdot |A/\n|/(q-1)  = \varphi(\n)|\n|.
\end{equation}
To count the cardinality of $G(\n)$, observe that
$$|\mathrm{SL}(2,A/\n)|=\frac{|\{\gamma \in \mathrm{GL}(2,A/\n) : \det \gamma \in \F_q^* \}|}{q-1},$$
since any nonzero value in $\F_q$ of the determinant is taken equally often when considering elements in $\{\gamma \in \mathrm{GL}(2,A/\n) : \det \gamma \in \F_q^* \}$. By definition of $G(\n)$, we obtain that
$$|G(\n)|=|\mathrm{SL}(2,A/\n)|.$$
The cardinality of $\mathrm{SL}(2,A/\n)$ is well known and can be computed using the Chinese remainder theorem. This approach gives that if $\n=\prod_i\p_i^{r_i}$ for prime ideals $\p_i \subset A$, then
$$|\mathrm{SL}(2,A/\n)|=\prod_i|\mathrm{SL}(2,A/\p_i^{r_i})|=\prod_i |\p_i|^{3r_i-2}(|\p_i|^2-1) =\varphi(\n)\varepsilon(\n)|\n|,$$ implying that
\begin{equation}\label{eq:|G(n)|}
|G(\n)|= \varphi(\n)\varepsilon(\n)|\n|.
\end{equation}

We now turn our attention again to the Galois cover $x(\n)/x(1)$. It was shown in \cite{gekeler1986a} that the only ramification in this cover occurs above the so-called elliptic points (with ramification index $q+1$) and the cusps of $x(1)$. Moreover, as mentioned before, the number of cusps on $x(1)$ equals $\delta h(F)$. The elliptic points were studied in \cite[V.4,VII.5]{gekeler1986a}: The number of elliptic points on $x(1)$ is $0$ if $\delta$ is even and $P(-1)$ if $\delta$ if odd, each with ramification index $q+1$ in the cover $x(\n)/x(1)$. We now write, just as before, $\n=\p_1^{r_1} \cdots \p_s^{r_s}$ for prime ideals $\p_1,\dots,\p_s$ of $A$ and positive integers $r_1,\dots,r_s$. Although $x(1)$ contains $P(-1)$ elliptic points if $\delta$ is odd, such an elliptic point does not give rise to ramification in the cover $x(\n)/x_0(\n)$ if any of the $\p_i$ has odd degree. If $\delta$ is odd and all prime ideals $\p_i$ occurring in the decomposition of $\n$ have even degree, 
among all the points of $x_0(\n)$ that are lying above a given elliptic point of $x(1)$ there are exactly $2^s$ that are ramified in the covering $x(\n)/x_0(\n)$ (with ramification index $q+1$). This completely determines the behaviour of elliptic points as far as their role in the genus computation of $x(\n)$ and $x_0(\n)$ goes. To describe the behaviour of the cusps, we start by describing their ramification groups in $x(\n)/x(1)$ (following \cite[VII.5]{gekeler1986a}):

\begin{lemma}[Lemma 5.6 \cite{gekeler1986a}]
Let $$G(\n)_\infty:=\left\{\left(\begin{matrix} a & b \\ 0 & d \end{matrix}\right) \in \mathrm{GL}(2,A/\n) : a,d \in \F_q^* \right\}/Z(\F_q).$$
Then the stabilizers of all cusps of $x(\n)$ are conjugate in $G(\n)$ to $G(\n)_\infty.$
\end{lemma}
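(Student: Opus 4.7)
The plan is to compute the stabilizer of one conveniently chosen cusp of $x(\n)$ via the Tate-type local model near a cusp of $x(1)$, obtaining $G(\n)_\infty$ directly, and then to deduce the conjugacy of all cusp stabilizers from the Galois action of $G(\n)$ on cusp fibers combined with the fact that the local model has the same shape at every cusp of $x(1)$.

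First, I would recall Gekeler's local description \cite[VI.5]{gekeler1986a}: in a formal neighbourhood of a cusp $\mathfrak{c}$ of $x(1)$ the universal rank-$2$ $A$-Drinfeld module $\phi$ arises via a Tate construction from a rank-$1$ $A$-Drinfeld module $\phi'$ together with a rank-$1$ $A$-lattice $\Lambda$. For $\n$ coprime to the local $A$-characteristic, this yields a canonical short exact sequence of $A/\n$-modules
\[
0 \longrightarrow \phi'[\n] \longrightarrow \phi[\n] \longrightarrow \Lambda/\n\Lambda \longrightarrow 0,
\]
with both outer terms free of rank one over $A/\n$. A point of $x(\n)$ over $\mathfrak{c}$ is the $Z(\F_q)$-orbit of an isomorphism $\alpha\colon (A/\n)^{2}\to \phi[\n]$, and $G(\n)$ acts on the fiber by precomposition $\alpha\mapsto \alpha\circ\gamma$.

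Second, I would compute the stabilizer of one chosen cusp $c_0$ above $\mathfrak{c}$ by taking $\alpha_0$ so that its first standard basis vector is sent to a generator of $\phi'[\n]$ and its second lifts a chosen generator of $\Lambda/\n\Lambda$. An element $\gamma\in G(\n)$ fixes $c_0$ if and only if $\alpha_0\circ\gamma$ and $\alpha_0$ differ by an automorphism of the Tate datum $(\phi',\Lambda)$. Preservation of the canonical subspace $\phi'[\n]$ forces $\gamma$ to be upper triangular; the induced scalar action on the first factor $\phi'[\n]$ must come from $\mathrm{Aut}(\phi')=\F_q^{*}$, and on the quotient $\Lambda/\n\Lambda$ from an $A$-homothety of $\Lambda$, i.e., from $A^{*}=\F_q^{*}$. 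Hence both diagonal entries of $\gamma$ lie in $\F_q^{*}$ while the upper-right entry is free in $A/\n$, and after dividing by $Z(\F_q)$ one obtains $\mathrm{Stab}(c_0)=G(\n)_\infty$.

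Finally, for an arbitrary cusp $c$ of $x(\n)$ I would argue conjugacy in two steps. If $c$ lies above the same cusp $\mathfrak{c}$ as $c_0$, Galois transitivity of $G(\n)$ on the fiber supplies $\gamma\in G(\n)$ with $\gamma\cdot c_0=c$, so that $\mathrm{Stab}(c)=\gamma\, G(\n)_\infty\,\gamma^{-1}$. If $c$ lies above a different cusp $\mathfrak{c}'$ of $x(1)$, the identical Tate construction at $\mathfrak{c}'$ produces a cusp $c_0'$ whose stabilizer equals $G(\n)_\infty$ in the basis natural to $\mathfrak{c}'$; re-expressing that basis in terms of the one attached to $\mathfrak{c}$ furnishes a matrix in $\mathrm{GL}(2,A/\n)$ that conjugates $\mathrm{Stab}(c_0)$ to $\mathrm{Stab}(c_0')$. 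The step I expect to be the main obstacle is verifying that this interpolating matrix really lies in $G(\n)$, i.e., has determinant in $\F_q^{*}$; this amounts to a compatibility statement between the local Tate models at distinct cusps of $x(1)$ and is the technical heart of Lemma~5.6 in \cite{gekeler1986a}.
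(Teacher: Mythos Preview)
The paper does not give its own proof of this lemma: it is quoted verbatim as Lemma~5.6 from \cite{gekeler1986a} and used as input for the genus computation. So there is nothing in the present paper to compare your argument against beyond the citation.

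Your outline is essentially Gekeler's approach and is correct in substance, but you have manufactured an unnecessary difficulty in the last step. The group $G(\n)$ and its subgroup $G(\n)_\infty$ are defined abstractly as matrix groups over $A/\n$ modulo $Z(\F_q)$; they do not depend on a choice of cusp of $x(1)$. Your local Tate computation at a second cusp $\mathfrak{c}'$ produces a standard cusp $c_0'$ whose stabilizer is \emph{literally} the same subgroup $G(\n)_\infty\subset G(\n)$, because the short exact sequence $0\to\phi'[\n]\to\phi[\n]\to\Lambda/\n\Lambda\to 0$ and the identification $\mathrm{Aut}(\phi')=\F_q^{*}=A^{*}$ have the same shape at every cusp. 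There is no change-of-basis matrix to produce and hence no determinant condition to check. The argument is simply: at each cusp of $x(1)$ there is one cusp of $x(\n)$ with stabilizer equal to $G(\n)_\infty$, and Galois transitivity on each fiber then gives conjugacy for all cusps.
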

This means in particular that the ramification index in $x(\n)/x(1)$ of any cusp equals $|G(\n)_\infty|=(q-1)^2|\n|/(q-1)=(q-1)|\n|.$ The cardinality of the first, resp. second, ramification group of any cusp is then calculated in \cite[Lemma 5.7]{gekeler1986a} to be $|\n|$, resp. $1$. This means that the different exponent for a cusp equals $(q-1)|\n|-1+|\n|-1=q|\n|-2$. Combining this information concerning the ramification groups of the cusps with the description of the ramification behaviour of the elliptic points, makes the computation of the genus of $x(\n)$ completely feasible using the Riemann--Hurwitz genus formula. The result (given in slightly less explicit form in \cite[Theorem 5.11]{gekeler1986a}) is:
\begin{equation}\label{eq:Xn}
g(x(\n)) =1 + \frac{(q^\delta-1)P(q)}{(q^2-1)(q-1)}\varphi(\n)\varepsilon(\n)|\n| -  \frac{\delta P(1)}{q-1}\varphi(\n)\varepsilon(\n).
\end{equation}

The ramification behaviour of the cusps is more complicated in the cover $x(\n)/x_0(\n)$. However, in \cite[VII.5]{gekeler1986a} (with reference to \cite[3.4.15]{gekeler1979drinfeld}) the total contribution to the Riemann--Hurwitz genus formula for the cover $x(\n)/x_0(\n)$ of all cusps of $x(\n)$ lying above a single cusp of $x(1)$ is computed to be
\begin{equation}\label{eq:cuspcontr}
(q-1)^{-1}\varphi(\n)(2|\n| \kappa(\n) + 2^s(q-2) |\n| - 2\varepsilon(\n)).
\end{equation}
We now have all the ingredients needed for the proof of Theorem \ref{th:X0n}
\begin{proof}
For any point $P$ of $x(\n)$, we denote by $e(P)$, resp. $d(P)$, the ramification index, resp. different exponent, in the cover $x(\n)/x_0(\n)$. Since the only ramified points in the cover $x(\n)/x(1)$ are the cusps and the elliptic points (if these exist), applying the Riemann--Hurwitz genus formula for the cover $x(\n)/x_0(\n)$ we obtain:
\begin{equation}\label{eq:RH}
2g(x(\n))-2=\varphi(\n)|\n|(2g(x_0(\n))-2)+\sum_{P \ \makebox{cusp}} d(P)+\sum_{\substack{P \ \makebox{elliptic} \\ \makebox{point}}}d(P).
\end{equation}
The sum concerning the elliptic points is zero if no such points exist and therefore:
$$\sum_{\substack{P \ \makebox{elliptic} \\ \makebox{point}}} d(P)=0,$$
if $\delta$ is even or if there exists $\p_i$ of odd degree. Otherwise, as we have described previously, above each of the $P(-1)$ cusps of $x(1)$ lie exactly $2^s$ points of $x_0(\n)$ that ramify with ramification index $q+1$ in $x(\n)/x_0(\n)$. This implies that
$$\sum_{\substack{P \ \makebox{elliptic} \\ \makebox{point}}} d(P)=\sum_{\substack{P \ \makebox{elliptic} \\ \makebox{point}}} q=P(-1)2^s q |\n|\varphi(\n)/(q+1),$$
if $\delta$ is odd and all prime divisors of $\n$ have even degree.

The summation over the points lying over any of the $\delta h(F)$ cusps of $x(1)$ can be dealt with using Equation \eqref{eq:cuspcontr}. We obtain that
$$\sum_{P \ \makebox{cusp}} d(P) = \delta h(F)(q-1)^{-1}\varphi(\n)(2|\n| \kappa(\n) + 2^s(q-2) |\n| - 2\varepsilon(\n)).$$
Substituting these values in Equation \eqref{eq:RH} and using Equation \eqref{eq:Xn}, Theorem \ref{th:X0n} follows.
\end{proof}

\section{Rational points on reductions of Drinfeld modular curves}

In this section, we combine the previously described genus computation of the curves $x_0(\n)$ with the fact that reductions of these curves have many rational points (when the field of definition is chosen properly). We will show that for any sequence of ideals $(\n_k)_{k\ge 1}$ such that $\deg \n_k \to \infty$ as $k \to \infty$, the corresponding family of reductions of Drinfeld modular curves $(x_0(\n_k))_k$ has good asymptotic properties. In \cite{taelman2006} the (reductions of the) curves $x_0(\n)$ were also investigated in case $\n$ is a principal ideal, using a different method inspired by \cite{Ihara1982}. Our approach is to use, for any ideal $\n$, results from \cite{Gekeler1990a} to estimate the number of rational points on the reduction of $x_0(\n)$ and to use the explicit genus formula for $g(x_0(\n))$ from the previous section.

While the curves $X_0(\n)$ themselves are defined over the function field $F$ (and a component $x_0(\n)$ over an extension field of $F$), a model can be found that can be reduced modulo prime ideals of the ring $A$. This reduction is known to be good if $P \subset A$ is a prime ideal which is coprime with the ideal $\n$. Thus, reduction modulo $P$ gives rise to a curve (as before not necessarily absolutely irreducible) that is defined over the finite field $A/P$. For convenience we write $F_P:=A/P$ and denote by $F_P^{(m)}$ the degree $m$ extension of $F_P$. In case $A=\F_q[T]$, these reduced Drinfeld modular curves have many rational points over $F_P^{(2)}$ (essentially corresponding to supersingular $A$-Drinfeld modules), but it turns out that in general the situation is slightly more complicated. As a matter of fact the supersingular Drinfeld modules in $A$-characteristic $P$ are in general defined over the field $F_P^{(2e)}=\F_{q^{2de}}$ with $d=\deg P$ and $e=\mathrm{ord }\, P$, the order of the ideal $P$ in the ideal class group of the ring $A$ \cite[Section 4]{Gekeler1990a}.

More precisely, in \cite{Gekeler1990a} it was shown that for a prime ideal $P \subset A$ with $d:=\deg P$, the number $N(P)$ of isomorphism classes of supersingular $A$-Drinfeld modules in $A$-characteristic $P$ equals $N(P)=h_1(P)+h_2(P)$ with
\begin{equation}\label{eq:numberh1}
h_1(P):=\left\{
\begin{array}{rl}
\delta P(1)\left( P(q)\frac{(q^\delta-1)(q^d-1)}{(q^2-1)(q-1)}-\frac{P(-1)}{q+1} \right),& \makebox{if $d$ and $\delta$ are odd,}\\
\\
\delta P(1)P(q)\frac{(q^\delta-1)(q^d-1)}{(q^2-1)(q-1)} & \makebox{otherwise,}
\end{array}
\right.
\end{equation}
and
\begin{equation}\label{eq:numberh2}
h_2(P):=\left\{
\begin{array}{rl}
\delta P(1)P(-1), & \makebox{if $d$ and $\delta$ are odd,}\\
\\
0 & \makebox{otherwise.}
\end{array}
\right.
\end{equation}
Each isomorphism class of a supersingular $A$-Drinfeld module gives rise to a rational point (which we will call a supersingular point) on the curve $X(1)$, if the field of definition is taken to be $F_P^{(2e)}$. Using the action given by the class group of $A$ on the absolutely irreducible components of $X(1)$, one  sees that the supersingular points are equidistributed among all $\delta P(1)$ components of $X(1)$. These observations enable us to give a lower bound on the number of rational points on $x_0(\n)$:
\begin{theorem}
Let $\n \subset A$ be an ideal prime to the $A$-characteristic $P$ and suppose that $\n = \p_1^{r_1} \cdots \p_s^{r_s}$, for prime ideals $\p_1,\dots,\p_s$ and positive integers $r_1,\dots,r_s$. Moreover, denote by $d:=\deg P$ and $e:=\mathrm{ord } \, P$. Consider over the finite field $F_P^{(2e)}$ a component $x_0(\n)$ of $X_0(\n)$ and denote by $N_1(x_0(\n))$ its number of rational points. Then if $d,\delta$ are odd, and $\deg \p_i$ is even for all $i$, we have
$$N_1(x_0(\n)) \ge \varepsilon(\n)P(q)\frac{(q^\delta-1)(q^d-1)}{(q^2-1)(q-1)}+P(-1)2^s \frac{q}{q+1},$$
while otherwise
$$N_1(x_0(\n)) \ge \varepsilon(\n)P(q)\frac{(q^\delta-1)(q^d-1)}{(q^2-1)(q-1)}.$$
\end{theorem}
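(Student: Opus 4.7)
The plan is to lower-bound $N_1(x_0(\n))$ over $F_P^{(2e)}$ by pulling back supersingular rational points of $x(1)=x_0(1)$ along the natural cover $x_0(\n)\to x(1)$. First I would record that this cover has degree $|G(\n)|/|H(\n)|=\varepsilon(\n)$ by \eqref{eq:|H(n)|} and \eqref{eq:|G(n)|}. Then, by \eqref{eq:numberh1}--\eqref{eq:numberh2}, the $A$-characteristic $P$ carries $h_1(P)+h_2(P)$ isomorphism classes of supersingular Drinfeld modules, all defined over $F_P^{(2e)}$ and giving rational points on $X(1)$. The class-group action on the $\delta P(1)$ absolutely irreducible components of $X(1)$ distributes them equally, so the chosen component $x(1)$ carries $(h_1(P)+h_2(P))/(\delta P(1))$ supersingular rational points.

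The heart of the argument is a rationality input: for supersingular $\phi$ defined over $F_P^{(2e)}$ the relative Frobenius lies in $A\subset\mathrm{End}(\phi)$, so it acts as a scalar on $\phi[\n]$ and preserves every cyclic $A/\n$-submodule. A preimage of $[\phi]\in x(1)$ in $x_0(\n)$ is an $\aut(\phi)$-orbit of cyclic submodules $C\subset\phi[\n]$, and it is $F_P^{(2e)}$-rational precisely when some representative $C$ is Frobenius-stable; the scalar action makes \emph{every} orbit rational.

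Now I count orbits. For non-elliptic $\phi$, $\aut(\phi)=\F_q^*$ acts trivially and gives $\varepsilon(\n)$ rational preimages. For elliptic $\phi$ (possible only when $d$ and $\delta$ are both odd), $\aut(\phi)/\F_q^*$ is cyclic of order $q+1$, and its action on cyclic submodules is exactly the one already analysed in describing the ramification of elliptic points in $x(\n)/x_0(\n)$: when all $\p_i$ have even degree the action has $2^s$ fixed points and $(\varepsilon(\n)-2^s)/(q+1)$ free orbits, totalling $(2^s q+\varepsilon(\n))/(q+1)$ rational preimages; when some $\p_i$ has odd degree all orbits are free and one gets $\varepsilon(\n)/(q+1)$ rational preimages. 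Combining the non-elliptic contribution $\varepsilon(\n)\cdot h_1(P)/(\delta P(1))$ with the elliptic contribution $h_2(P)/(\delta P(1))$ times the appropriate orbit count and substituting \eqref{eq:numberh1}--\eqref{eq:numberh2}, the two stated bounds drop out after a short simplification: in the ``mixed'' case the terms $\pm\varepsilon(\n)P(-1)/(q+1)$ cancel, while in the ``all even'' case a residual contribution $P(-1)\cdot 2^s q/(q+1)$ survives.

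The hard part will be the rationality input together with the precise orbit description of $\aut(\phi)$ on cyclic submodules at elliptic supersingular points; both follow from the endomorphism-ring structure of supersingular Drinfeld modules developed in \cite{Gekeler1990a}.
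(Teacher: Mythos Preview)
Your proposal is correct and follows essentially the same route as the paper: bound $N_1(x_0(\n))$ from below by counting the $F_P^{(2e)}$-rational preimages of the supersingular points of $x(1)$, splitting into the $h_1(P)/(\delta P(1))$ non-elliptic ones (each with $\varepsilon(\n)$ preimages) and the $h_2(P)/(\delta P(1))$ elliptic ones (with the modified count dictated by the extra $\aut(\phi)/\F_q^*$-action). The only cosmetic difference is that the paper phrases the elliptic count in terms of ramification indices in $x_0(\n)/x(1)$ rather than $\aut(\phi)$-orbits on cyclic $A/\n$-submodules; these are two ways of saying the same thing, and your explicit Frobenius-in-$A$ justification for rationality is a detail the paper simply asserts with a reference to \cite{Gekeler1990a}.
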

\begin{proof}
All points of $x_0(\n)$ lying above one of the $N(P)/(\delta P(1))$ supersingular points of $x(1)$ are rational, but not necessarily unramified in the covering $x_0(\n)/x(1)$. The reason for this is that the elliptic points are supersingular points if (and only if) both $\delta$ and $d$ are odd \cite[Lemma 7.2]{Gekeler1990a}. However, any elliptic point has ramification index either one, or $q+1$ in the cover $x_0(\n)/x(1)$. Moreover, from \cite[V.4,VII.5]{gekeler1986a} we see that if $\delta$ is odd and all prime ideals $\p_i$ occurring in the decomposition of $\n$ have even degree, 
among all the points of $x_0(\n)$ that are lying above a given elliptic point of $x(1)$ there are exactly $2^s$ that are ramified in the covering $x(\n)/x_0(\n)$ (with ramification index $q+1$). The latter statement is equivalent to saying that these $2^s$ points of $x_0(\n)$ have ramification index $1$ in $x_0(\n)/x(1)$. Counting the number of points of $x_0(\n)$ lying above the supersingular points of $x(1)$ now is direct and yields the stated lower bound on $N_1(x_0(\n))$.
\end{proof}

From Theorem \eqref{th:X0n} we get the following asymptotic result:
\begin{theorem}\label{th:lim}
Let $A$ be any ring of functions regular outside a fixed place $\infty$ of degree $\delta$. Let $P \subset A$ be a prime ideal of degree $d$ and order $e$ and further let  $(\n_k)_{k \ge 1}$ be a series of ideals relatively prime to $P$. The family of reductions of Drinfeld modular curves $(x_0(\n_k))_k$ when defined over $\F_{q^{2de}}$ satisfies $$\lim_{k \to \infty} \frac{N_1(x_0(\n_k))}{g(x_0(\n_k))} \ge q^d-1.$$
\end{theorem}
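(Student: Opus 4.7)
The plan is to combine the lower bound for $N_1(x_0(\n_k))$ established in the preceding theorem with the explicit genus formula of Theorem~\ref{th:X0n}, and to observe that both quantities are dominated by the common leading term
$$L(\n_k) := \frac{(q^\delta-1)\varepsilon(\n_k)P(q)}{(q^2-1)(q-1)},$$
with the numerator bound carrying the extra factor $q^d - 1$. Factoring $L(\n_k)$ out of both numerator and denominator reduces the theorem to showing that the remaining correction terms in the resulting ratio tend to zero as $\deg \n_k \to \infty$.

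More precisely, let $s_k$ denote the number of distinct prime divisors of $\n_k$. A direct inspection of the two formulas shows that there exist constants $c_1, c_2, c_3, c_4$ depending only on $q$, $\delta$ and the values $P(\pm 1)$ such that
$$\frac{N_1(x_0(\n_k))}{g(x_0(\n_k))} \ge \frac{(q^d-1) + c_1\,2^{s_k}/\varepsilon(\n_k)}{1 + c_2/\varepsilon(\n_k) + c_3\,\kappa(\n_k)/\varepsilon(\n_k) + c_4\,2^{s_k}/\varepsilon(\n_k)}.$$
It therefore suffices to prove that the three ratios $1/\varepsilon(\n_k)$, $\kappa(\n_k)/\varepsilon(\n_k)$ and $2^{s_k}/\varepsilon(\n_k)$ each tend to zero. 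The first is immediate from the multiplicative bound $\varepsilon(\n) \ge |\n| = q^{\deg \n}$.

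For the other two I would exploit the multiplicativity of $\kappa$ and $\varepsilon$ together with the direct calculation
$$\frac{\kappa(\p^r)}{\varepsilon(\p^r)} = \begin{cases} |\p|^{-r/2} & \text{if $r$ is even,}\\[2pt] \dfrac{2}{|\p|^{(r-1)/2}(|\p|+1)} & \text{if $r$ is odd,}\end{cases}$$
which follows straight from Definition~\ref{def:usefulfunctions}. Each such factor is bounded above by $2/3$, which together with the easy estimate $\varepsilon(\n) \ge \prod_i(|\p_i|+1) \ge 3^{s}$ yields $\kappa(\n)/\varepsilon(\n) \le (2/3)^{s}$ and $2^{s}/\varepsilon(\n) \le (2/3)^{s}$. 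On any subsequence along which $s_k \to \infty$ this settles the matter at once; on a subsequence along which $s_k$ stays bounded, the hypothesis $\deg \n_k \to \infty$ forces at least one prime degree $\deg \p_i$ or at least one exponent $r_i$ to tend to infinity, and the per-factor formula above then shows that the corresponding factor in $\kappa(\n_k)/\varepsilon(\n_k)$ tends to zero while $\varepsilon(\n_k) \to \infty$.

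The only genuine obstacle is the bookkeeping in this last case analysis, which must handle in a uniform way every regime compatible with $\deg \n_k \to \infty$: many small primes, few primes of large degree, bounded radicals with one large exponent, and all intermediate combinations. Once these estimates are in hand, passing to the limit in the displayed inequality yields $\liminf_k N_1(x_0(\n_k))/g(x_0(\n_k)) \ge q^d - 1$, which is the content of the theorem.
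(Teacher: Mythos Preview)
Your approach is correct and is precisely the argument the paper leaves to the reader: the paper states the theorem immediately after the rational-point lower bound with only the phrase ``From Theorem~\ref{th:X0n} we get the following asymptotic result,'' so the intended proof is exactly the comparison of leading terms that you carry out. Your verification that $1/\varepsilon(\n_k)$, $\kappa(\n_k)/\varepsilon(\n_k)$ and $2^{s_k}/\varepsilon(\n_k)$ all tend to zero is sound; the per-factor bound $\kappa(\p^r)/\varepsilon(\p^r)\le 2/3$ together with $\kappa(\p^r)/\varepsilon(\p^r)\le 2\,|\p|^{-r/2}$ handles the bounded-$s_k$ regime cleanly (one factor has $r_i\deg\p_i\to\infty$ and hence tends to zero, the remaining $\le S-1$ factors are each at most $1$), so the ``bookkeeping'' you flag as an obstacle is in fact routine.
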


\begin{remark}
The lower bound given in Theorem \ref{th:lim} is sharp in case $P$ is a principal ideal, since in this case $e=1$ and the given lower bound is equal to the Drinfeld--Vladut upper bound. If $A=\F_q[T]$ (in particular $\delta=1$), the ideal class group of $A$ is trivial, implying that any family of reductions of Drinfeld modular curves as in Theorem \ref{th:lim} has optimal asymptotic properties. This particular case was shown in \cite{gekeler2004a}. If $P$ is not principal, the resulting families will be asymptotically good, but not optimal. Note that in \cite{taelman2006} this subtlety is missing.
\end{remark}

\section{A recursive description of a Drinfeld modular tower}\label{sec:recursive}

In this section we will illustrate Theorem \ref{th:lim} by describing some families of Drinfeld modular curves $(x_0(\n_k))_k$ more explicitly. In case $\n_k=\p^k$ for a fixed prime ideal $\p$ of $A$, this can be done in a recursive way (in fact $\p$ could be any non-trivial ideal, but we will assume primality for simplicity). The reason for this is similar to the reasoning presented in \cite{Elkies98explicitmodular,elkies2001a}, but is somewhat more involved due to the fact that the curves $X(1)$ and $X_0(\p^k)$ are not absolutely irreducible in general. Therefore, we go through the argument in the following.

A point on $X_0(\p)$ corresponds to an isomorphism class $[\phi,\psi]$ of a pair of $\p$-isogenous $A$-Drinfeld modules of rank two. Therefore, there are two possible maps, say $\pi_1$ and $\pi_2$, from $X_0(\p)$ to $X(1)$, see Figure \ref{fig:correspondence}, since one can send $[\phi,\psi]$ to $[\phi]$ or $[\psi]$ (the isomorphism class of $\phi$ or that of $\psi$). Since a $\p$-isogeny corresponds to a cyclic submodule of the $\p$-torsion points of $\phi$, the degree of the first map is $|\p|+1$. By symmetry, the degree of the second map is also $|\p|+1$.

The image of a fixed absolutely irreducible component $x_0(\p)$ of $X_0(\p)$ under either $\pi_1$ or $\pi_2$, will be an absolutely irreducible component of $X(1)$, but not necessarily the same one. We denote these components by $x^{1}(1)$ and $x^{2}(1)$. We can then view $x_0(\p)$ as a curve lying inside $x^{1}(1) \times x^{2}(1)$. Once an explicit description of the components of $x^{1}(1)$ and $x^{2}(1)$ is available, the map $\pi_1 \times \pi_2: x_0(\p) \to x^{1}(1) \times x^{2}(1)$ defined by $[\phi,\psi] \mapsto ( [\phi],[\psi] )$, can be in principle be used to describe the curve $x_0(\p)$ explicitly by equations. However, in practice it is very convenient to assume that the genera of the components of $X(1)$ are zero. In this case, a component $x^{i}(1)$ can just be described using a single variable $u_i$, which one can think of as a $j$-invariant of an $A$-Drinfeld module. In this case a component of $X_0(\p)$ can be described using a bivariate polynomial $\Phi(u_1,u_2)$ of bi-degree $(|\p|+1,|\p|+1)$ (that is, of degree $|\p|+1$ in either of the two variables $u_1$ and $u_2$). Note that for $\n=1$, Equation \eqref{eq:Xn} states that
\begin{equation}\label{eq:X1}
g(x(1)) = 1 + (q^2-1)^{-1}\left( \frac{q^\delta-1}{q-1}P(q) - \frac{q(q+1)}{2}\delta P(1) + \eta \right),
\end{equation}
where $\eta = -q(q-1)P(-1)/2$ for $\delta$ odd, $\eta=0$ otherwise. As a matter of fact, this formula was stated in \cite[VI.5.8]{gekeler1986a} and was used as a key ingredient there to showing Equation \eqref{eq:Xn}. Using Equation \eqref{eq:X1}, one readily sees that $g(x(1))=0$ if $F=\F_q(T)$ and $\delta \in \{1,2,3\}$ or if $F$ is the function field of an elliptic curve and $\delta=1$. For simplicity, we assume from now on that we are in one of these situations, though the general considerations below remain valid in the general case as well. However, finding explicit equations is only possible if (the function field of) the curve $x(1)$ can be given explicitly, which is trivial if it has genus zero.

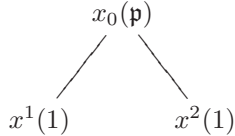
\begin{figure}[h]
\begin{displaymath}
\xymatrix@C=2pt{
& x_0(\p)\ar@{-}[dl]\ar@{-}[dr]\\
  x^{1}(1) & & x^2(1) 
}
\end{displaymath}
\caption{A correspondence of modular curves.}
\label{fig:correspondence}
\end{figure}

A description of $x_0(\p^2)$ (a component of $X_0(\p^2)$) can now be obtained relatively easily. A point on $X_0(\p^2)$ corresponds to an isomorphism class $[\phi_1,\phi_3]$ of a pair of $\p^2$-isogenous $A$-Drinfeld modules of rank two. Let $\mu: \phi_1 \to \phi_3$ be the corresponding $\p^2$-isogeny. Then there exists a $A$-Drinfeld module $\phi_2$ of rank two and $\p$-isogenies $\lambda_1: \phi_1 \to \phi_2$ and $\lambda_2: \phi_2 \to \phi_3$ such that $\mu=\lambda_2 \circ \lambda_1$. The isomorphism class of $[\phi_i]$ will correspond to a point on a component $x^i(1)$ of $X(1)$. This means that we can map $x_0(\p^2)$ to $x^1(1) \times x^2(1) \times x^3(1)$. Note that both $[\phi_1,\phi_2]$ and $[\phi_2,\phi_3]$ correspond to points on $X_0(\p)$, lying on certain components, say $x_0^1(\p)$ and $x^2_0(\p)$. Using the above procedure, we can describe these two components as the zero set of polynomials $\Phi^1(u_1,u_2)$ and $\Phi^2(u_2,u_3)$, both of bi-degree $(|\p|+1,|\p|+1)$. This means that image of the map from $x_0(\p^2)$ to $x^1(1) \times x^2(1) \times x^3(1)$ is part of the zero set of the polynomials $\Phi^1(u_1,u_2)$ and $\Phi^2(u_2,u_3)$. However, this zero set turns out to be too large. The reason for this is that if $(\phi_1,\phi_2)$ and $(\phi_2,\phi_3)$ are two pairs of $\p$-isogenous $A$-Drinfeld modules of rank two, with $\p$-isogenies denoted by $\lambda_1$ and $\lambda_2$, then $\lambda_2 \circ \lambda_1$ is either a $\p^2$-isogeny, or has kernel isomorphic to $A/\p \times A/\p$. Here we used that $\p$ is a prime ideal. The latter case gives rise to additional elements in the zero set of $\Phi^1(u_1,u_2)$ and $\Phi^2(u_2,u_3)$. However, this issue is rather easy to resolve: We work over the function field of $x_0^1(\p)$, which we can construct using the polynomial $\Phi^1(u_1,u_2)$. The polynomial $\Phi^2(u_2,u_3)$, viewed as a univariate polynomial in $u_3$ and coefficients in the function field of $x_0^1(\p)$, 
has degree $|\p|+1$ in $u_3$ while the extension degree of $X_0(\p^2) / X(1)$ is $\varepsilon(\p^2)=(|\p|+1)|\p|$. Then the polynomial $\Phi^2(u_2,u_3)$ is not absolutely irreducible and has a (for degree reasons necessarily unique) component of degree $|\p|$ in $u_3$. This component can then be used to construct (the function field of) $x_0(\p^2)$, also see Figure \ref{fig:level2}.

\begin{figure}[h]
\begin{displaymath}
\xymatrix@C=2pt{
& & x_0(\p^2)\ar@{-}[dl]\ar@{-}[dr] \\
& x^1_0(\p)\ar@{-}[dl]\ar@{-}[dr] & & x^2_0(\p)\ar@{-}[dl]\ar@{-}[dr] \\
  x^{1}(1) & & x^2(1) & & x^3(1) 
}
\end{displaymath}
\caption{Recursive description of $x_0(\p^2)$.}
\label{fig:level2}
\end{figure}
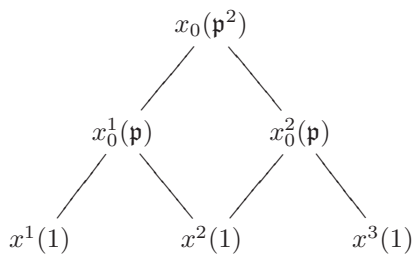

Iterating this procedure gives rise to an explicit recursive description of $x_0(\p^k)$ for any $k \ge 1$. One effectively just increases the size of the pyramid in Figures \ref{fig:correspondence} and \ref{fig:level2}. Note that since $X(1)$ only has finitely many absolutely irreducible components, ultimately the same components will start to occur, see Figure \ref{fig:tower}.

\begin{figure}[h]
\begin{displaymath}
\xymatrix@C=2pt{
& & & \ar@{-}[dl] & & &\cdots & & & \ar@{-}[dr] & &  & & &\\
& & x^1_0(\p^2)\ar@{-}[dl]\ar@{-}[dr] & & \ar@{-}[dl]& &\cdots & & \ar@{-}[dr]& & x^1_0(\p^2)\ar@{-}[dl]\ar@{-}[dr] & & & & \\
& x^1_0(\p)\ar@{-}[dl]\ar@{-}[dr] & & x^2_0(\p)\ar@{-}[dl]\ar@{-}[dr] & &  & \cdots &  & & x^1_0(\p)\ar@{-}[dl]\ar@{-}[dr] & & x^2_0(\p)\ar@{-}[dl]\ar@{-}[dr] & &  &\\
x^{1}(1) & & x^2(1) & & x^3(1) & & \cdots & & x^1(1) & & x^2(1) & & x^3(1) & & 
}
\end{displaymath}
\caption{The pyramid of Drinfeld modular curves.}
\label{fig:tower}
\end{figure}
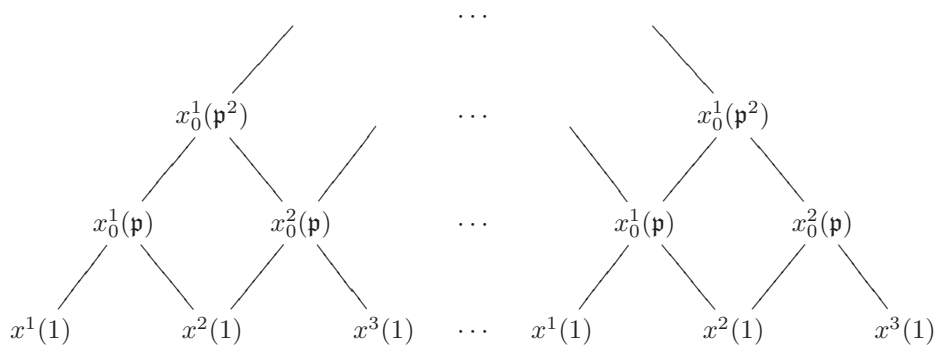

In case $A=\F_q[T]$, $\delta=1$, $\p=T$ and $A$-characteristic $T-1$, explicit equations were found in \cite{Elkies98explicitmodular}. In this case all curves $X(1),X_0(\p^k)$ are absolutely irreducible, so there is no need to keep track of components or to distinguish between $X_0(T^k)$ and one of its components $x_0(T^k)$. The curve $X_0(T)$ can be described using the Drinfeld modular polynomial $\Phi_T(u_1,u_2)$. However, the approach in \cite{Elkies98explicitmodular} exploits the fact that the genera of the curves $X_0(T)$ and $X_0(T^2)$ are zero. Compared to our approach this means that the "pyramid" in Figure \ref{fig:tower} starts at $X_0(\p^2)$, but otherwise the recursive description is similar: The points on the curve $X_0(T^k)$ are identified with points in $X_0(T^2) \times \cdots \times X_0(T^2)$, while each of the component curves $X_0(T^2)$ can be described using a single parameter $v_i$. For more details see \cite{Elkies98explicitmodular,bassa2014a}.

\section{An new explicit example of an optimal Drinfeld modular tower}

In \cite{bassa2014a} some examples of good towers were found following the above approach, including one where the function field $F$ was the function field of an elliptic curve and $\delta=1$. More precisely, in the latter example in \cite{bassa2014a} one had $F=\F_2(X,Y)$ with $X$ transcendental over $\F_2$ and $Y^2+Y=X^3+X$, while ``infinity'' was chosen to be the place at infinity of this elliptic curve, implying that $\delta=1$. The ring $A$ is then easily seen to be $\F_2[X,Y] \cong \F_2[T,S]/\langle S^2+S+T^3+T \rangle$. A description was given of the tower $X_0(\p^k)$ with $\p:=\langle X+1,Y+1\rangle \subset A$ and $A$-characteristic $P:=\langle X,Y \rangle$. Note that $\deg P=1$, since $P$ is a rational point on the elliptic curve, and $\ord P = 5$, since the elliptic curve has $5$ rational points, meaning that the group of rational points is cyclic of order $5$. It was shown in \cite{bassa2014a} by explicit computation that the tower $X_0(\p^k)$ (in $A$-characteristic $\langle T,S \rangle$) has limit at least $1$ when the constant field is set to $\F_{2^{10}}$. This result is confirmed by Theorem \ref{th:X0n}. In this section we will in a similar way as in \cite{bassa2014a} describe an explicit example of an optimal tower. Contrary to the example referred to above and motivated by Theorem \ref{th:X0n}, the choice of $A$-characteristic $P$ is now made such that $\ord P=1$, implying that the resulting tower is optimal. The point with this example is not to give another optimal tower, but to show an explicit description is within reach. Such a description is useful for applications in for example coding theory.

More precisely, we will consider the following setting:

\begin{enumerate}
\item $F/\mathbb{F}_q := \F_2(X,Y)/\F_2$, where $Y^2+XY+X^2=X$ and $X$ is transcendental over $\F_2$,
\item $A := \F_2[X,Y]$, implying $\delta=2$.
\item the $A$-characteristic $P$ is the principal prime ideal $\langle X^2+X+1 \rangle \subset A$.
\end{enumerate}

Note that the function field $F$ has genus $0$, implying that the L-polynomial $P(t)$ occurring in the zeta function of $F$ is simply $P(t)=1$. Therefore the curve $X(1)$ has $\delta P(1)=2$ absolutely irreducible components, say $x^1(1)$ and $x^2(1)$ both of genus $0$ according to Equation \ref{eq:X1}. Since for the given choice of $P$ we have $\ord P =1$ (since $P$ is a principal ideal) and $\deg P=4$, Theorem \ref{th:X0n} implies that, for any choice of prime ideal $\p \subset A$ coprime with the $A$-characteristic $P$, the limit of the resulting family of curves $(X_0(\p^k))_{k}$ when defined over the finite field $\F_{2^8}$ equals $\sqrt{2^8}-1=15$. In other words, the resulting family of curves is optimal over $\F_{2^8}$.

We start by indicating how to describe $A$-Drinfeld modules explicitly. An $A$-Drinfeld module of rank $2$ is symbolically determined by
\begin{align*}
\phi_X &= g_0\tau^4 + g_1\tau^3 + g_2\tau^2 + g_3\tau + \iota(X),\\
\phi_Y &= h_0\tau^4 + h_1\tau^3 + h_2\tau^2 + h_3\tau + \iota(Y).
\end{align*}

Since we have chosen the principal prime ideal $\langle X^2+X+1 \rangle$ as $A$-characteristic, we have $\iota(X)^2+\iota(X)+1=0$ and, using the equation of the curve, $\iota(Y)^2+\iota(X)\iota(Y)+\iota(X)^2=\iota(X)$. For convenience we will write $$x:=\iota(X) \ \makebox{ and } \ y:=\iota(Y).$$ We see that $x=\iota(X) \in \F_4$ and $y=\iota(Y) \in \F_{16}$. The remaining coefficients also satisfy several algebraic relations, stemming from the fact that $\phi_X \phi_Y = \phi_Y \phi_X$ and $\phi_{Y^2+XY+X^2-X}=0$. Indeed, any choice of $g_0,\dots,h_3$ satisfying these relations gives rise to a Drinfeld module. The equation $\phi_X \phi_Y = \phi_Y \phi_X$, implies that:
\begin{align}
g_0h_0^{q^4} &= h_0g_0^{q^4} \label{eq:normalize}\\
g_0h_1^{q^4} + g_1h_0^{q^3} &= h_0g_1^{q^4} + h_1g_0^{q^3} \label{eq:add4}\\
g_0h_2^{q^4} + g_1h_1^{q^3} + g_2h_0^{q^2} &= h_0g_2^{q^4} + h_1g_1^{q^3} + h_2g_0^{q^2} \label{eq:add3}\\
g_0h_3^{q^4} + g_1h_2^{q^3} + g_2h_1^{q^2} + g_3h_0^q &= h_0g_3^{q^4} + h_1g_2^{q^3} + h_2g_1^{q^2}+h_3g_0^q \label{eq:add2}\\
g_1h_3^{q^3} + g_2h_2^{q^2} + g_3h_1^q + x h_0 &= h_0x^{q^4} + h_1g_3^{q^3} + h_2g_2^{q^2} + h_3g_1^q \label{eq:add1}\\
g_1y^{q^3} + g_2h_3^{q^2} + g_3h_2^q + xh_1 &= h_1x^{q^3} + h_2g_3^{q^2} + h_3g_2^q + yg_1 \label{eq:g1}\\
g_2y^{q^2} + g_3h_3^q + xh_2 &= h_2x^{q^2} + h_3g_3^q + yg_2 \label{eq:g2} \\
g_3y^q+xh_3 &= h_3x^q + yg_3 \label{eq:g3}
\end{align}

Note that in this section we assume that $q=2$. Similarly the equation $\phi_{Y^2+XY+X^2-X}=0$ gives rise to algebraic relations. From Equations \eqref{eq:g3}, \eqref{eq:g2} and \eqref{eq:g1}, one sees that the three variables $g_3,g_2,g_1$ can be expressed in the three variables $h_3,h_2,h_1$. After eliminating $g_1,g_2,g_3$ in this way, Equations \eqref{eq:add1}, \eqref{eq:add2}, \eqref{eq:add3}, \eqref{eq:add4} give rise to pairs of polynomials in $h_1$. These polynomials turn out to have a very special form: they are linearized polynomials in $h_1$ plus a constant term. Therefore, we can use the $q$-linearized variant of the Euclidean algorithm to eliminate the variable $h_1$ very efficiently, thus avoiding a lengthy Groebner basis computation. Finally we may use Equation \eqref{eq:normalize} to normalize the leading coefficients $g_0$ and $h_0$ by putting $h_0=1$ and choosing $g_0\in \F_4$ such that $g_0^2+g_0+1=0$. 
We are then left with an explicit algebraic equation relating $h_2$ and $h_3$, say $f(h_2,h_3)=0$, with coefficients in $\F_{16}$. The equation is a bit lengthy, but we state it for the sake of completeness:
\begin{equation*}
\footnotesize
\begin{split}
f(h_2,h_3)& = h_2^{30} + (xy + x)h_2^{29}h_3^{3} + (y + x)h_2^{27}h_3^{9} + (xy + 1)h_2^{26}h_3^{12} + (y + 1)h_2^{25} + \\
&\hspace{-0.8cm} (xy + x)h_2^{24}h_3^{18} + (x^{2}y + x^{2})h_2^{24}h_3^{3} +
        yh_2^{23}h_3^{21} + (x^{2}y + 1)h_2^{23}h_3^{6} + x^{2}yh_2^{22}h_3^{9} + \\
&\hspace{-0.8cm} (xy + 1)h_2^{21}h_3^{27} + (x^{2}y + x)h_2^{21}h_3^{12} + h_2^{20}h_3^{30} + (y + 1)h_2^{20}h_3^{15} + (xy + 1)h_2^{20} + \\
&\hspace{-0.8cm} (x^{2}y + x^{2})h_2^{19}h_3^{18} + yh_2^{18}h_3^{36} + (xy + x)h_2^{18}h_3^{6} + (y + x)h_2^{17}h_3^{39} + (y + x^{2})h_2^{17}h_3^{24} + \\
&\hspace{-0.8cm} xh_2^{17}h_3^{9} + (x^{2}y + 1)h_2^{16}h_3^{27} + xyh_2^{16}h_3^{12} + h_2^{15}h_3^{45} + (y + 1)h_2^{15}h_3^{30} + xyh_2^{15}h_3^{15} + \\
&\hspace{-0.8cm} (y + x)h_2^{15} + (x^{2}y + x^{2})h_2^{14}h_3^{33} + (y + 1)h_2^{14}h_3^{18} + h_2^{14}h_3^{3} + yh_2^{13}h_3^{51} + xyh_2^{13}h_3^{36} + \\
&\hspace{-0.8cm} xh_2^{13}h_3^{21} + (xy + x)h_2^{13}h_3^{6} + (y + x)h_2^{12}h_3^{54} + x^{2}yh_2^{12}h_3^{39} + (x^{2}y + x)h_2^{12}h_3^{9} + \\
&\hspace{-0.8cm} (x^{2}y + x)h_2^{11}h_3^{42} + (y + x^{2})h_2^{11}h_3^{27} + xh_2^{11}h_3^{12} + h_2^{10}h_3^{60} + (y + x^{2})h_2^{10}h_3^{45} + xh_2^{10}h_3^{30} + \\
&\hspace{-0.8cm} (y + x)h_2^{10}h_3^{15} + (xy + 1)h_2^{10} + (xy + x)h_2^{9}h_3^{63} + x^{2}yh_2^{9}h_3^{48} + (xy + x)h_2^{9}h_3^{33} + \\
&\hspace{-0.8cm} (xy + 1)h_2^{9}h_3^{18} + (xy + x)h_2^{9}h_3^{3} + xyh_2^{8}h_3^{51} + (x^{2}y + x)h_2^{8}h_3^{36} + (xy + x)h_2^{8}h_3^{21} + \\
&\hspace{-0.8cm} (y + x)h_2^{8}h_3^{6} + (y + x)h_2^{7}h_3^{69} + (y + x^{2})h_2^{7}h_3^{54} + (x^{2}y + 1)h_2^{7}h_3^{39} + (xy + 1)h_2^{7}h_3^{24} + \\
&\hspace{-0.8cm} xh_2^{7}h_3^{9} + (xy + 1)h_2^{6}h_3^{72} + xyh_2^{6}h_3^{42} + (xy + 1)h_2^{6}h_3^{27} + (y + x^{2})h_2^{6}h_3^{12} + xh_2^{5}h_3^{60} + \\
&\hspace{-0.8cm} (xy + 1)h_2^{5}h_3^{45} + h_2^{5}h_3^{30} + (xy + x^{2})h_2^{5}h_3^{15} + (y + 1)h_2^{5} + (xy + x)h_2^{4}h_3^{78} + yh_2^{4}h_3^{48} + \\
&\hspace{-0.8cm} (x^{2}y + x)h_2^{4}h_3^{33} + (xy + x)h_2^{4}h_3^{18} + x^{2}h_2^{4}h_3^{3} + yh_2^{3}h_3^{81} + xyh_2^{3}h_3^{66} + xh_2^{3}h_3^{51} + \\
&\hspace{-0.8cm} (x^{2}y + x^{2})h_2^{3}h_3^{36} + xyh_2^{3}h_3^{21} + (xy + x^{2})h_2^{2}h_3^{69} + (y + x)h_2^{2}h_3^{54} + (y + 1)h_2^{2}h_3^{39} + \\
&\hspace{-0.8cm} (y + x)h_2^{2}h_3^{24} + (y + x^{2})h_2^{2}h_3^{9} + (xy + 1)h_2h_3^{87} + h_2h_3^{57} + x^{2}yh_2h_3^{42} + (x^{2}y + x^{2})h_2h_3^{27} + \\
&\hspace{-0.8cm} (x^{2}y + 1)h_2h_3^{12} + h_3^{90} + xh_3^{75} + h_3^{60} + x^{2}h_3^{45} + x^{2}h_3^{30} + 1.\\
\end{split}
\end{equation*}
This equation does not describe the curve $X(1)$, since we did not consider isomorphism classes of $A$-Drinfeld modules yet. Therefore, let $\psi$ be another $A$-Drinfeld module, with the same $A$-characteristic and normalized in the same way as $\phi$, defined by
\begin{align*}
\psi_X &= l_0\tau^4 + l_1\tau^3 + l_2\tau^2 + l_3\tau + \iota(X),\\
\psi_Y &= t_0\tau^4 + t_1\tau^3 + t_2\tau^2 + t_3\tau + \iota(Y).
\end{align*}
An isomorphism between $\phi$ and $\psi$ is a non-zero constant $c$ such that $c\phi = \psi c$. By considering for example the leading coefficient of $c\phi_Y = \psi_Y c$ we get $c^{q^4-1}=1$, implying that
\begin{equation}\label{eq:isominv}
t_1^{(q+1)(q^2+1)} = h_1^{(q+1)(q^2+1)}; t_2^{q^2+1} = h_2^{q^2+1}; t_3^{(q+1)(q^2+1)} = h_3^{(q+1)(q^2+1)}.
\end{equation}
In other words, the quantities $h_1^{(q+1)(q^2+1)}$, $h_2^{q^2+1}$, $h_3^{(q+1)(q^2+1)}$ (and similarly $g_{11}:=g_1^{(q+1)(q^2+1)}$, $g_{22}:=g_2^{q^2+1}$, $g_{33}:=g_3^{(q+1)(q^2+1)}$) are \emph{invariants} of $A$-Drinfeld modules.

Putting $h_{22}:=h_2^{q^2+1}$ and $h_{33}:=h_3^{(q+1)(q^2+1)}$, the previously found equation $f(h_2,h_3)=0$ relating $h_2$ and $h_3$, gives rise to a relation $p(h_{22},h_{33})=0$. One simply uses the relations $f(h_2,h_3),h_{2}^{q^2+1}-h_{22},h_3^{(q+1)(q^2+1)}-h_{33}$ and eliminates the variables $h_2$ and $h_3$ using a Groebner basis computation.  The resulting relation $p(h_{22},h_{33})=0$ then defines the Drinfeld modular curve $X(1)$. This is not immediately clear, since we strictly speaking only can be certain that the function field generated by $h_{22}$ and $h_{33}$ is a subfield of the function field of $X(1)$. However, again using a computer to perform a Groebner basis computation, one can show that this subfield already contains the remaining invariants $h_{11},g_{11},g_{22},$ and $g_{33}$. At first sight it might look s if $\F_{16}(h_{22},h_{33})$ has index $75$ in $\F_{16}(h_{2},h_{3})$. With a computer it can be verified that $h_2$ can be expressed in $h_{22}$ and $h_3$, implying that the index of $\F_{16}(h_{22},h_{33})$ in $\F_{16}(h_{2},h_{3})$ in fact is only $15$, in accordance with the number of possible choices of the isomorphism $c$ mentioned before Equation \eqref{eq:isominv}.

So far, we have computed an explicit model for the curve $X(1)$. The theory implies that this curve has two components. Indeed, according to this prediction, the bivariate polynomial $p(t,s)$ is not absolutely irreducible, but has two absolutely irreducible factors, say $p^1(t,s)$ and $p^2(t,s)$, which turn out to have coefficients in $\F_{16}$. These factors define the curves that we previously denoted by $x^1(1)$ and $x^2(1)$.

To start a recursive description of a tower of function fields, we choose one of the components, say the one defined by $p^1(h_{22},h_{33})=0$ defining the component denoted by $x^1(1)$ . Since this curve has genus zero by Equation \eqref{eq:X1}, its function field is rational and can be described using a parameter $u$, so $\F_{16}(h_{22},h_{33})=\F_{16}(u)$. 

To describe a tower as in the previous section, we need to choose a prime ideal $\p$. In this section we choose $\p = \langle X,Y \rangle \subset A$, which is coprime with the $A$-characteristic. Since $\deg \p =1$, a $\p$-isogeny $\lambda$ between $\phi$ and $\psi$ is of the form $\tau -a$. From the isogeny property $\lambda\phi_Y = \psi_Y \lambda$ and using as before $x:=\iota(X)$ and $y:=\iota(Y)$, we get
\begin{align}
t_3 &= a^{-q}(y - y^q + ah_3), \\
t_2 &= a^{-q^2} t_3 + a^{1-q^2} h_2 - a^{-q^2} h_3^q.
\end{align}
A direct verification shows that if we set $t_{33}=t_3^{(q+1)(q^2+1)}$ and $t_{22}=t_2^{q^2+1}$ then $t_{33}, t_{22}$ satisfy $p^2(t_{22},t_{33})=0$. In other words, the isogeny maps the component $x^1(1)$ of $X(1)$ to the other component $x^2(1)$. Similar to the uniformizing parameter $u$ of $x^1(1)$, one can find a uniformizing parameter $v$ of $x^2(1)$. Using the above isogeny relation, we can compute $\Phi^1(u,v)=0$ defining $x^1_0(\p)$ like in Figure \ref{fig:uv}.
\begin{figure}[h]
\begin{displaymath}
\xymatrix{
\F_{q^4}(u,h_2,h_3,a) \ar@{-}[d]^{\lambda=\tau-a}\ar@{=}[r]^{\lambda \phi = \psi \lambda} & \F_{q^4}(v,t_2,t_3,a) \ar@{-}[d] \\
\F_{q^4}(u,h_2,h_3) \ar@{-}[d]^{h_{33}=h_3^{(q+1)(q^2+1)} }_{h_{22}=h_2^{q^2+1}} & \F_{q^4}(v,t_2,t_3) \ar@{-}[d]\\
\F_{q^4}(h_{22},h_{33}) = \F_{q^4}(u) & \F_{q^4}(v) = \F_{q^4}(t_{22},t_{33})
}
\end{displaymath}
\caption{Defining $x^1_0(\p)$ explicitly by $\Phi^1(u,v)=0$.}
\label{fig:uv}
\end{figure}
Similarly, starting with the component $x^2(1)$, one finds the relation $\Phi^2(v,w)=0$ defining $x_0^2(\p)$. Explicitly, one obtains:
\begin{align*}
\Phi^1(u,v)&=(u + (x^2 y + 1)) v^3\\
&+ (y u^3 + (x y + 1) u^2 + x^2 y u + (x y + x)) v^2 \\
&+ ((y    + x^2) u^2 + (x^2 y + 1) u + (x y + 1)) v\\
&+ (y + 1) u^3 + x u^2 + y u +  x^2 y + x^2,\\
\Phi^2(v,w)&=(v + x y) w^3\\
&+ ((y + x) v^3 + x^2 y v^2 + x y v + 1) w^2 \\
&+ ((y + 1) v^2 + v + (y + 1)) w\\
&+ (x^2 y + x) v^3 + (y + x) v^2 + (x y + 1) v + x y.
\end{align*}
Now we can construct the tower of function fields $\mathcal{F}=(F_0 \subset F_1 \subset \cdots)$ corresponding to the modular tower $(x_0(\p^k))_k$ by
\begin{enumerate}
\item $F_0 = \F_{16}(u_0)$,
\item $F_1 = F_0(u_1)$ with $\Phi^1(u_0,u_1)=0$.
\item $F_{k} = F_{k-1}(u_k)$ where $\Phi^1(u_{k-1},u_k)=0$ if $k$ odd, $\Phi^2(u_{k-1},u_k)=0$ otherwise.
\end{enumerate}
As remarked in Section \ref{sec:recursive}, for $k>1$, the equations $\Phi^i(u_{k-1},u_k)=0$ give rise to two possible factors: one of degree one in $u_k$ and one of degree $|\p|=q=2$. The factor of degree $2$ should be chosen when defining the tower.

\section{Conclusion}
In this paper we give a recursive description of (reductions of) Drinfeld modular towers for any possible base ring $A$ as well as a lower bound for the limit of such towers. It turns out that good reductions of Drinfeld modular towers are always good, when defined over the proper constant field, but not always optimal. The theory presented here fully explains the behaviour of a Drinfeld modular tower given in \cite{bassa2014a}. Furthermore, an explicit recursive description of an optimal Drinfeld tower over $\F_{16}$ that has not been considered in the literature before is given. This further demonstrates that explicit descriptions of Drinfeld modular towers are not restricted to the case that the base ring $A$ is the polynomial ring.



\section{Acknowledgments}
The authors would like to thank the anonymous referee for helpful suggestions and comments, that helped to improve the paper. The last two authors gratefully acknowledge the support from the Danish National Research Foundation and the National Science Foundation of China (Grant No.11061130539) for the Danish-Chinese Center for Applications of Algebraic Geometry in Coding Theory and Cryptography as well as the support from The Danish Council for Independent Research (Grant No. DFF--4002-00367). The first author is supported by Tubitak Proj. No. 112T233.

\vspace{1ex}

\bibliographystyle{plain}
\bibliography{Drinfeld_modular_curve}

\begin{thebibliography}{10}

\bibitem{bassa2014a}
A.~Bassa, P.~Beelen, and N.~Nguyen.
\newblock {Good towers of function fields}.
\newblock In {\em Algebraic Curves and Finite Fields}, Radon Series on
  Computational and Applied Mathematics, pages 23--40. Walter de Gruyter, 2014.

\bibitem{Elkies98explicitmodular}
N.~D. Elkies.
\newblock {Explicit modular towers}.
\newblock In {\em Proceedings of the Thirty-Fifth [1997] Annual Allerton
  Conference on Communication, Control and Computing}, pages 23--32. Univ. of
  Illinois at Urbana-Champaign, 1998.

\bibitem{elkies2001a}
N.~D. Elkies.
\newblock {Explicit towers of Drinfeld modular curves}.
\newblock In {\em European Congress of Mathematics}, volume 202 of {\em
  Progress In Mathematics}, pages 189--198. Birkhauser Basel, 2001.

\bibitem{garcia1996a}
A.~Garcia and H.~Stichtenoth.
\newblock {On the Asymptotic Behaviour of Some Towers of Function Fields over
  Finite Fields}.
\newblock {\em Journal of Number Theory}, 61(2):248--273, 1996.

\bibitem{gekeler1979drinfeld}
E-U. Gekeler.
\newblock {\em {Drinfeld-Moduln und modulare Formen {\"u}ber rationalen
  Funktionenk{\"o}rpern}}.
\newblock Bonner mathematische Schriften. Mathematischen Institut der
  Universit{\"a}t Bonn, 1979.

\bibitem{gekeler1986a}
E-U. Gekeler.
\newblock {\em {Drinfeld Modular Curves}}, volume 1231 of {\em Lecture Notes in
  Mathematics}.
\newblock Springer-Verlag, Berlin Heidelberg, 1986.

\bibitem{Gekeler1990a}
E-U. Gekeler.
\newblock {Sur la g\'{e}om\'{e}trie de certaines alg\`{e}bres de quaternions}.
\newblock {\em {Journal de Th\'{e}orie des Nombres de Bordeaux (2)}},
  2(1):143--153, 1990.

\bibitem{gekeler2004a}
E.-U. Gekeler.
\newblock {Asymptotically optimal towers of curves over finite fields}.
\newblock In {\em Algebra, Arithmetic and Geometry with Applications}, pages
  325--336. Springer-Verlag, Berlin Heidelberg, 2004.

\bibitem{goldschmidt}
David~M. Goldschmidt.
\newblock {\em Algebraic functions and projective curves}, volume 215 of {\em
  Graduate Texts in Mathematics}.
\newblock Springer-Verlag, New York, 2003.

\bibitem{Goppa1981170}
V.~D. Goppa.
\newblock {Codes on Algebraic Curves}.
\newblock {\em Soviet Math. Dokl.}, 24(1):170--172, 1981.

\bibitem{Goss1996}
D.~Goss.
\newblock {\em {Basic Structures of Function Field Arithmetic}}.
\newblock Springer-Verlag, Berlin Heidelberg, 1996.

\bibitem{Ihara1982}
Y.~Ihara.
\newblock {Some remarks on the number of rational points of algebraic curves
  over finite fields}.
\newblock In {\em Journal of the Faculty of Science, the University of Tokyo,
  Sect. 1 A, Mathematics}, volume~28, pages 721--724. 1982.

\bibitem{schweizerconservative}
A.~Schweizer.
\newblock {Hyperelliptic Drinfeld Modular Curves}.
\newblock In {\em Drinfeld Modules, Modular Schemes and Applications}, pages
  330--343. World Scientific, Singapore, 1997.

\bibitem{serre}
J.-P. Serre.
\newblock {Sur le nombre des points rationnels d'une courbe alg\'{e}brique sur
  un corps fini}.
\newblock {\em {C. R. Acad. Sci. Paris}}, 296:397--402, 1983.

\bibitem{taelman2006}
L.~{Taelman}.
\newblock {Drinfeld modular curves have many points}.
\newblock {\em ArXiv Mathematics e-prints}, February 2006.

\bibitem{MANA:MANA19821090103}
M.~A. Tsfasman, S.~G. Vl\^{a}dut, and Th. Zink.
\newblock {Modular curves, Shimura curves, and Goppa codes, better than
  Varshamov-Gilbert bound}.
\newblock {\em Mathematische Nachrichten}, 109(1):21--28, 1982.

\bibitem{Vladut1983}
S.~G. Vl\^{a}dut and V.~G. Drinfel'd.
\newblock {Number of points of an algebraic curve}.
\newblock {\em Functional analysis and its applications}, 17(1):53--54, 1983.

\end{thebibliography}

\noindent
Alp Bassa\\
Bo\u{g}azi\c{c}i University,
Faculty of Arts and Sciences,
Department of Mathematics,
34342 Bebek, \.{I}stanbul,
Turkey,
 alp.bassa@boun.edu.tr
\vspace{1ex}

\noindent
Peter Beelen and Nhut Nguyen\\
Technical University of Denmark,
Department of Applied Mathematics and Computer Science,
Matematiktorvet 303B, 2800 Kgs. Lyngby,
Denmark, pabe@dtu.dk, nhngu@dtu.dk
\end{document}